\newtheorem{theorem}{Theorem}[section]
\newtheorem{lemma}[theorem]{Lemma}
\newtheorem{corollary}[theorem]{Corollary}
\newtheorem{proposition}[theorem]{Proposition}
\theoremstyle{definition}
\newtheorem{definition}[theorem]{Definition}
\theoremstyle{remark}
\numberwithin{equation}{section}
\DeclareMathOperator{\row}{row}
\DeclareMathOperator{\clmn}{col}
\DeclareMathOperator{\rnk}{rank}
\DeclareMathOperator*{\argmax}{argmax}
\newcommand{\norm}[1]{\left\Vert #1 \right\Vert}
\newcommand{\Prb}[1]{\mathbb{P}\left[ #1 \right]}
\newcommand{\E}[1]{\mathbb{E}\left[ #1 \right]}
\newcommand{\cond}[2]{\mathbb{E}\left[\left. #1 \right\vert #2 \right]}
\newcommand{\condPrb}[2]{\mathbb{P}\left[\left. #1 \right\vert #2 \right]}
\newcommand{\linspan}[1]{\mathrm{span}\left[ #1 \right]}
\begin{document}

\title{Convergence of Adaptive, Randomized, Iterative Linear Solvers}


\author[Patel]{Vivak Patel}
\address{Department of Statistics, University of Wisconsin -- Madison, Madison, Wisconsin 53706}
\curraddr{}
\email{vivak.patel@wisc.edu}
\thanks{Vivak Patel is supported by the Wisconsin Alumni Research Foundation}

\author[Jahangoshahi]{Mohammad Jahangoshahi}
\address{Susquehanna International Group, Bala Cynwyd, Pennsylvania 19004}
\curraddr{}
\email{mjahangoshahi@uchicago.edu }
\thanks{}

\author[Maldonado]{Daniel Adrian Maldonado}
\address{Mathematics and Computer Science, Argonne National Laboratory, Lemont, Illinois 60439}
\email{maldonadod@anl.gov}
\thanks{Daniel Adrian Maldonado is supported by DOE Contract DE- AC02-06CH11347}

\subjclass[2020]{Primary 65F10, 68W20}

\date{}

\dedicatory{}

\begin{abstract}
Deterministic and randomized, row-action and column-action linear solvers have become increasingly popular owing to their simplicity, low computational and memory complexities, and ease of composition with other techniques. Moreover, in order to achieve high-performance, such solvers must often be adapted to the given problem structure and to the hardware platform on which the problem will be solved. Unfortunately, determining whether such adapted solvers will converge to a solution has required equally unique analyses. As a result, adapted, reliable solvers are slow to be developed and deployed. In this work, we provide a general set of assumptions under which such adapted solvers are guaranteed to converge with probability one, and provide worst case rates of convergence. As a result, we can provide practitioners with guidance on how to design highly adapted, randomized or deterministic, row-action or column-action linear solvers that are also guaranteed to converge.
\end{abstract}

\maketitle

\section{Introduction}
\label{section-introduction}
Iterative linear solvers are often preferred for solving large-scale linear systems, as they can take advantage of problem structure such as sparsity or bandedness, require inexpensive floating point operations, and can be readily paired with preconditioning techniques \cite[see preface]{saad2003}. While such iterative linear solvers as Conjugate Gradients (CG) and the Generalized Minimal Residual method (GMRES) are still dominant solvers in practice, randomized row-action \cite{karczmarz1937,agmon1954,motzkin1954,strohmer2009} and column-action iterative solvers \cite{leventhal2010,zouzias2013} have been growing in interest for several reasons: they (usually) require very few floating point operations per iteration \cite{gordon1970,censor1981}; they have low-memory footprints \cite{lent1976}; they can readily be composed with randomization techniques to quickly produce approximate solutions \cite{strohmer2009,leventhal2010,wallace2014,gower2015,ma2015,bai2018,haddock2019,PAJAMA2019}; they can be used for solving systems constructed in a streaming fashion (e.g., \cite{needell2016}), which supports emerging computing paradigms (e.g., \cite{mills2020}); and, just like the more popular iterative Krylov solvers, they can be parallelized, preconditioned or combined with other linear solvers \cite{sameh1978,nutini2016,du2020,richtarik2020};

Unfortunately, vanilla forms of these row-action and column-action iterative solvers do not consider problem structure or make any hardware considerations, which often results in untenable inefficiencies \cite{nutini2016}. 
To illustrate, consider the Kaczmarz method with uniform row sampling with replacement as applied to the systems whose coefficient matrices are pictorially represented in Figures \ref{fig-ortho-system} and \ref{fig-split-system}. In the coefficient matrix given by Figure \ref{fig-ortho-system}, the aforementioned Kaczmarz method would require over twice as many iterations (in expectation) in comparison to a row-action method that accounted for the orthogonal structure \cite{nutini2016}.

\begin{figure}[ht!]

        \centering
        \begin{tikzpicture}
\def\div{5};
\def\dim{4};

\draw[step=0.2cm,gray,very thin, opacity=0.5] (0,0) grid (\dim,\dim);

\pgfmathparse{\dim/(\div * 0.2 )};
\let\size\pgfmathresult;

\foreach \i in {1,...,\div}{
	\pgfmathparse{ \i/\div*100 }
	\let\col\pgfmathresult;

	\foreach \j in {1,...,\size}{
	\foreach \k in {1,...,\size}{
		\pgfmathparse{(\i-1)* (\dim/\div) + (\j-1)*0.2 + 0.1};
		\let\x\pgfmathresult;
		
		\pgfmathparse{(\i-1)* (\dim/\div) + (\k-1)*0.2 + 0.1};
		\let\y\pgfmathresult;
		
		\node at (\x,\y) [circle, fill=orange!\col!blue, inner sep=1.5pt] {};
		
	};	
	};
};

%
%
%
%

%
%




\end{tikzpicture}
        \caption{A coefficient matrix of a linear system with block structure. Blocks with the same color have colinear rows. Empty cells indicate zeros of the given matrix.}
        \label{fig-ortho-system}
\end{figure}
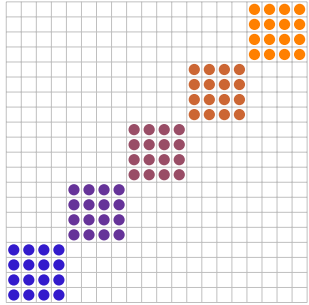

In the coefficient matrix given by Figure \ref{fig-split-system}, the aforementioned Kaczmarz procedure would require reading in a new group almost evey other iteration (in expectation), which would have detrimental Input and Output (I/O) costs. In comparison, a procedure which randomly cycled through the rows of a given group until the group's residual was reduced to a fraction of its original value and then moved on to a new group would require substantially lower I/O costs.
\begin{figure}[ht!]
        \centering
        \begin{tikzpicture}
\def\div{5};
\def\dim{4};

\draw[step=0.2cm,gray,very thin, opacity=0.5] (0,0) grid (3,\dim);

\pgfmathparse{\dim/(\div * 0.2 )};
\let\size\pgfmathresult;

\pgfmathparse{\size*\div};
\let\steps\pgfmathresult;

\foreach \i in {1,...,\div}{
	\pgfmathparse{ \i/\div*100 }
	\let\col\pgfmathresult;
	
	\pgfmathparse{\size+2}
	\let\start\pgfmathresult;
	\foreach \j in {\start,...,\steps}{
	\foreach \k in {1,...,\size}{
		\pgfmathparse{(\j-\start)*0.2 + 0.1};
		\let\x\pgfmathresult;
		
		\pgfmathparse{(\i-1)* (\dim/\div) + (\k-1)*0.2 + 0.1};
		\let\y\pgfmathresult;
		
		\node at (\x,\y) [circle, fill=orange!\col!blue, inner sep=1.5pt] {};
		
	};	
	};
};

\def\cpu{5};

\foreach \p in {2,...,\cpu}{
	\pgfmathparse{\dim/(\cpu)*(\p-1)};
	\let\y\pgfmathresult;
	\draw[dashed,thick] (-2,\y) -- (3.5,\y);
};


\foreach \p in {1,...,\cpu}{
	\pgfmathparse{\dim - \dim/(\cpu)*(\p-0.5)};
	\let\y\pgfmathresult;
	\node at (-1,\y) {Group \p};
};
\end{tikzpicture}
        \caption{A coefficient matrix of a linear system too large to store in memory. The system is split into groups (by color) that are just small enough to fit in memory.}
        \label{fig-split-system}
\end{figure}
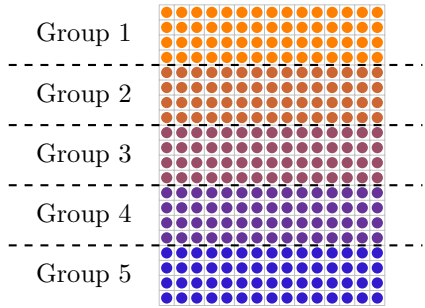

As these examples show, it is easy to imagine a plethora of adaptive variants of row-action and column-action methods, both random and deterministic, that would take advantage of the unique problem structures and hardware considerations to readily increase the speed-to-solution. Unfortunately, heretofore, any such adaptive variants have required their own unique analyses (e.g., \cite{agmon1954,motzkin1954,strohmer2009,leventhal2010,gower2015,bai2018,haddock2019,steinerberger2020}). As a result, rigorous, adaptive iterative solvers have been difficult to develop and deploy. 

To address this issue, building from our previous results \cite{PAJAMA2019}, we specify a set of general conditions for such solvers under which we can guarantee convergence with probability one (w.p.1.). Moreover, we are also able to provide a worst case rate of convergence, which generalizes the theory for deterministic solvers \cite{agmon1954,motzkin1954} and complements the specialized mean-squared analyses for certain random solvers \cite{strohmer2009,leventhal2010,gower2015,bai2018,haddock2019,steinerberger2020}. Thus, we are able to provide practitioners with a set of guiding principles to readily develop and deploy solvers that are highly adapted to their problem's structure and to their hardware platform, while also guaranteeing convergence.

The remainder of this work is organized as follows. In \S \ref{section-adaptive-method}, we define general, adaptive, random and deterministic, row-action and column-action iterative solvers, and present a unified recursive equation for these methods, which is closely related to \cite{ma2015}. 
In \S \ref{section-notation}, we collect some notation that appears throughout the work. 
In \S \ref{section-strong-convergence}, we prove convergence with probability one and provide a rate of worst case rate of convergence.
In \S \ref{section-applications}, we demonstrate how our results can be applied to a broad array of row-action and column-action solvers.
In \S \ref{section-conclusion}, we conclude this work.

\section{The Generic Adaptive Method}
\label{section-adaptive-method}
For a given linear system, let $A \in \mathbb{R}^{n \times d}$ denote a coefficient matrix and $b \in \mathbb{R}^n$ denote the constant vector. A generic adaptive procedure, randomized or deterministic, starts with an initial iterate, $x_0 \in \mathbb{R}^d$, and some auxiliary information, $\zeta_{-1}$. The auxiliary information can be, for instance, the indices of unused equations for a column-action method that samples columns without replacement. With this initial information, a row-action adaptive procedure (e.g., randomized Kaczmarz) runs the iteration
\begin{subequations} \label{row-action-complete}
\begin{equation}
w_k, \zeta_{k} = \varphi(A, b, \lbrace x_j: j \leq k \rbrace, \lbrace w_j : j < k \rbrace, \lbrace \zeta_j : j < k \rbrace ) \\
\end{equation}
\begin{equation} \label{row-action}
x_{k+1} = x_k + \frac{A' w_k w_k'(b - Ax_k)}{\norm{A'w_k}_2^2},
\end{equation}
\end{subequations}
where $\varphi$ is a function specific to the adaptive procedure and can be random; and $w_k$ takes values in $\mathbb{R}^n$. Similarly, a column-action adaptive procedure (e.g., randomized Coordinate Descent) runs the iteration
\begin{subequations} \label{column-action-complete}
\begin{equation}
w_k, \zeta_{k} = \varphi(A, b, \lbrace x_j: j \leq k \rbrace, \lbrace w_j : j < k \rbrace, \lbrace \zeta_j : j < k \rbrace ) \\
\end{equation}
\begin{equation} \label{column-action}
x_{k+1} = x_k + \frac{w_k w_k'A'(b - Ax_k)}{\norm{Aw_k}_2^2},
\end{equation}
\end{subequations}
where $\varphi$ is a function specific to the adaptive procedure and can be random; and $w_k$ takes values in $\mathbb{R}^d$.

For the purposes of analysis, when $Ax = b$ is consistent---that is,
\begin{equation} \label{assumption-consistency}
\exists x \in \mathbb{R}^d : Ax = b \text{~---},
\end{equation}
we will be able to summarize \eqref{row-action} and \eqref{column-action} using a single recursive relationship. Namely,
\begin{equation} \label{general-recursion}
y_{k+1} = y_k - M'w_k \frac{ w_k'M y_k}{ \norm{ M' w_k}_2^2 }, 
\end{equation}
where,
\begin{enumerate}
\item for \eqref{row-action}, $y_k = x_k - x^*$ with $x^*$ as the projection of $x_0$ onto the solution set, and $M=A$; while, 
\item for \eqref{column-action}, $y_k = Ax_k - b$ and $M = A'$.
\end{enumerate}

From \eqref{general-recursion}, we conclude, the projections of $\lbrace y_k \rbrace$ onto the null space of $M$ are constant. In the case of \eqref{row-action}, we interpret this as the component of $x_0$ in the null space of $A$ remains unchanged by the procedure. In the case of \eqref{column-action}, we have no meaningful interpretation since the system is assumed to be consistent \eqref{assumption-consistency}.

\section{Notation}
\label{section-notation}
Here, we collect relevant notation and definitions. 
For all $j+1 \in \mathbb{N}$ and $k \in [1, j+1] \cap \mathbb{N}$, let $\mathcal{F}_k^j = \sigma( \zeta_{j-k}, x_{j-k+1},w_{j-k + 1}, \ldots,w_{j-1}, \zeta_{j-1}, x_j )$ represent the information contained at iteration $j$ up to the preceding $k$ iterations. 
Note, $\mathcal{F}_1^j = \sigma( \zeta_{j-1}, x_j )$.
For short hand, let $\mathcal{F}^j = \mathcal{F}_{j+1}^j$.
For completeness, let $\mathcal{F}^j_0$ be the trivial $\sigma$-algebra.

Now, let 
\begin{equation}
\chi_j = \begin{cases}
1 & w_j'My_j \neq 0 \\
0 & \textrm{otherwise},
\end{cases}
\end{equation}
which has several interpretations. For example, $\chi_j$ can be interpreted as whether $y_j$ is orthogonal to the plane of projection, specified by $M'w_j$. As another example, $\chi_j$ can be interpreted as whether $y_j$ is orthogonal to the search direction $M'w_j$.

Using $\chi_j$, we can define the subspace generated by the nontrivial search directions from iteration $j$ to iterations $j+k$. Specifically, for $j+1 \in \mathbb{N}$ and $k \geq 0$, define 
\begin{equation}
\Phi_k^j = \left\lbrace M'w_j\chi_j/\norm{ M'w_j}_2 ,\ldots, M'w_{j+k}\chi_{j+k}/\norm{M'w_{j+k}}_2  \right\rbrace.
\end{equation}
An important related subspace is the one generated by the iterates of the procedure. Namely, for $j+1 \in \mathbb{N}$ and $k \geq 0$, define 
\begin{equation}
V_k^j = \linspan{ y_{j}, y_{j+1},\ldots, y_{j+k} }.
\end{equation}
We will see that the interplay between $V_k^j$ and $\Phi_k^j$ will be key to demonstrating the convergence of the procedure. To establish this relationship, we will need to define two stopping times. Let 
\begin{equation}
\nu(j) = \min \left\lbrace k \geq 0: y_{j+k+1} \in V_k^j, \chi_{j+k} \neq 0 \right\rbrace,
\end{equation} and, as a related quantity, 
\begin{equation} 
s(j) = \min \lbrace k \geq 0: \chi_{j+k} \neq 0 \rbrace,
\end{equation}
for all $j+1 \in \mathbb{N}$.
Finally, for a vector space $W$, $V \subsetneq W$ indicates that $V$ is a proper subspace of $W$, and $V \subset W$ indicates that $V$ can be any subspace of $W$.

\section{Strong Convergence of the Generic Adaptive Method}
\label{section-strong-convergence}
The key inequality on which our results depend is an extension of Meany's inequality \cite{meany1969}, derived in \cite[Theorem 4]{PAJAMA2019}, and stated here.
\begin{theorem} \label{theorem-meany}
Fix $j+1,k+1 \in \mathbb{N}$. Let $\mathcal{G}$ denote all matrices $G$ whose columns are maximal linearly independent subsets of $\Phi_k^j$. Then, $\forall y \in \linspan{ \Phi_k^j}$,
\begin{equation} \label{equation-meany}
\begin{aligned}
& \norm{ \left( I - \frac{M' w_{j+k} w_{j+k}'M}{\norm{M' w_{j+k}}_2^2}\chi_{j+k} \right) \cdots \left(  I - \frac{M' w_{j} w_{j}'M}{\norm{M' w_{j}}_2^2}\chi_j  \right) y }_2^2 \\
&\quad\quad\quad\quad\quad\quad\quad \leq \left[ 1 - \min_{G \in \mathcal{G} } \det(G'G) \right] \norm{y}_2^2.
\end{aligned}
\end{equation}
\end{theorem}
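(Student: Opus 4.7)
The plan is to prove the theorem in three steps: a trivial reduction, a telescoping Pythagorean identity, and a Gram-determinant lower bound via a QR argument.

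First, I would eliminate the factors in the left-hand product with $\chi_{j+i}=0$, since each such factor is the identity and removing it changes neither the product nor the set $\Phi_k^j$ (the zero vectors in $\Phi_k^j$ contribute nothing to any maximal linearly independent subset, so $\mathcal{G}$ is also unchanged). After relabeling, we are reduced to bounding $\|P_m \cdots P_1 y\|_2^2$, where $P_i = I - u_i u_i^T$ with $u_i$ ranging over the nontrivial unit vectors in $\Phi_k^j$ in their order of appearance, and $y \in \linspan{\Phi_k^j}$.

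Second, iterating the elementary identity $\|P_i z\|_2^2 = \|z\|_2^2 - \langle u_i, z\rangle^2$ gives the telescoping relation
\begin{equation*}
\|P_m \cdots P_1 y\|_2^2 = \|y\|_2^2 - \sum_{i=1}^m \langle u_i, P_{i-1}\cdots P_1 y\rangle^2.
\end{equation*}
So the theorem reduces to proving the lower bound $\sum_{i=1}^m \langle u_i, P_{i-1}\cdots P_1 y\rangle^2 \geq \min_{G \in \mathcal{G}} \det(G^T G)\, \|y\|_2^2$.

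Third, I would lower-bound the sum via a QR-type argument. Pick any $G = [u_{i_1} , \ldots, u_{i_r}] \in \mathcal{G}$ with $i_1 < \cdots < i_r$, and factor $G = QR$ with $Q$ orthonormal and $R$ upper triangular with positive diagonal, so $\det(R)^2 = \det(G^T G)$. Since $y \in \linspan{\Phi_k^j} = \range{G} = \range{Q}$, write $y = Qc$ with $\|c\|_2 = \|y\|_2$. The upper-triangular structure of $R$ together with the fact that $P_{i_j-1}\cdots P_1$ annihilates the components of $y$ along $u_{i_1},\ldots,u_{i_{j-1}}$ (those directions having been projected out earlier, possibly interleaved with dependent projections that lie in their span once they appear) lets one identify $\langle u_{i_j}, P_{i_j - 1}\cdots P_1 y\rangle$ with (up to sign) the $j$-th entry of $Rc$. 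Dropping the non-negative terms corresponding to $i \notin \{i_1,\ldots,i_r\}$ from the full sum then yields
\begin{equation*}
\sum_{i=1}^m \langle u_i, P_{i-1}\cdots P_1 y\rangle^2 \geq \|Rc\|_2^2 = c^T G^T G\, c \geq \det(G^T G)\|c\|_2^2 = \det(G^TG)\|y\|_2^2,
\end{equation*}
and minimizing over $G \in \mathcal{G}$ finishes the proof.

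The hardest step will be the third one, specifically justifying that the interleaved dependent projections do not spoil the identification of the independent-subset inner products with entries of $Rc$. The dependent $u_i$'s, being linear combinations of the preceding independent ones, only enforce additional orthogonality constraints on $P_{i-1}\cdots P_1 y$ that are already implied by the independent projections placed before them; a careful induction on $r$ (the rank) or a direct bookkeeping argument using the fact that $P_{i-1}\cdots P_1 y$ lies in the orthogonal complement of $\linspan{u_1,\ldots,u_{i-1}}$ restricted to $\linspan{\Phi_k^j}$ should suffice to make this rigorous.
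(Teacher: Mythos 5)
Your steps (1) and (2) are fine and are the standard opening for results of this type: deleting the identity factors and iterating $\norm{P_i z}_2^2 = \norm{z}_2^2 - \langle u_i, z\rangle^2$ correctly reduces the claim to the lower bound $\sum_{i=1}^m \langle u_i, P_{i-1}\cdots P_1 y\rangle^2 \geq \min_{G\in\mathcal{G}} \det(G'G)\,\norm{y}_2^2$. (Note that the paper itself does not prove this theorem; it imports it verbatim from \cite[Theorem 4]{PAJAMA2019}, which in turn extends Meany's inequality, so there is no in-paper proof to compare against.) The fatal problems are in step (3), and there are two of them. First, the premise that $P_{i_j-1}\cdots P_1$ ``annihilates the components of $y$ along $u_{i_1},\ldots,u_{i_{j-1}}$'' is false: a product of hyperplane projections with non-orthogonal normals is not the orthogonal projection onto the intersection of those hyperplanes, and each new factor reintroduces components along earlier normals. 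Concretely, take orthonormal $q_1,q_2,q_3$, $u_1=q_1$, $u_2=aq_1+bq_2$, $u_3=\alpha q_1+\beta q_2+\gamma q_3$, and $y=c_1q_1+c_2q_2+c_3q_3$; then $P_2P_1y=-abc_2\,q_1+a^2c_2\,q_2+c_3\,q_3$, which is not orthogonal to $u_1$, and $\langle u_3,P_2P_1y\rangle=\gamma c_3+ac_2(a\beta-b\alpha)$, which is neither the third entry of $Rc$ (namely $\gamma c_3$) nor anything of determinant $\det(R)$'s shape. So the identification of the inner products with the entries of $Rc$ is simply incorrect.

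Second, even if that identification held, the closing inequality $c'G'Gc\geq\det(G'G)\norm{c}_2^2$ is false: it asserts $\lambda_{\min}(G'G)\geq\det(G'G)$, and already for two unit columns with inner product $\rho\in(0,1)$ one has $\lambda_{\min}(G'G)=1-\rho<1-\rho^2=\det(G'G)$. (The theorem survives in this two-vector example only because the true quadratic form produced by the telescoping sum is $c_1^2+(1-\rho^2)c_2^2$ in a suitable orthonormal basis, whose smallest eigenvalue is exactly $\det(G'G)$ --- not $c'G'Gc$, whose smallest eigenvalue is strictly smaller.) The entire difficulty of Meany's inequality is precisely to explain why the \emph{determinant} of the Gram matrix, rather than its smallest eigenvalue, controls the contraction, and that is exactly the point your QR bookkeeping elides. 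The argument cannot be repaired by the ``careful induction'' you gesture at for the interleaved dependent directions, because it already fails when all directions are linearly independent; you would need to replace step (3) wholesale with an argument along the lines of Meany's original inductive proof or the one in \cite{PAJAMA2019}.
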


As an example of how we will apply this result, we need to find the first iterate $j$, when $\mathrm{span}[\Phi^j_0]$ contains $y_0$. Applying Meany's inequality ensures a certain degree of reduction between $y_0$ and $y_{j+1}$ (in norm). To determine this value of $j$, we will need to relate $\Phi^j_k$ and $V_k^j$ and allow for the possibility that $j$ can be random. This is the content of the next result.

\begin{lemma} \label{lemma-revisit-characterization}
Let $\xi$ be an arbitrary, finite stopping time with respect to $\lbrace \mathcal{F}^k \rbrace$. Under \eqref{assumption-consistency}, if $\nu(\xi)$ is finite, then $V_{\nu(\xi)}^\xi = \mathrm{span}[ \Phi_{{\nu}(\xi)}^\xi].$ Moreover, the nonzero elements of $\Phi_{\nu(\xi)}^\xi$ are linearly independent.
\end{lemma}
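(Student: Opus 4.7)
The plan is to set $K = \nu(\xi)$ and enumerate the indices $\xi \le \ell_1 < \ell_2 < \cdots < \ell_m \le \xi + K$ at which $\chi_{\ell_r} = 1$; by the defining condition of $\nu(\xi)$ the largest one equals $\ell_m = \xi + K$. I will first establish the containment $\linspan{\Phi_K^\xi} \subseteq V_K^\xi$, then identify $V_k^\xi$ explicitly to obtain the reverse containment, and finally deduce linear independence of the nonzero elements of $\Phi_K^\xi$. The first containment is routine: the recursion \eqref{general-recursion} expresses each $\phi_i := M'w_i/\norm{M'w_i}_2$ with $\chi_i = 1$ as a nonzero scalar multiple of $y_i - y_{i+1}$. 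For $\xi \le i < \xi + K$ both iterates lie in $V_K^\xi$ by definition, while for $i = \xi + K$ the stopping rule ensures $y_{\xi + K + 1} \in V_K^\xi$; either way $\phi_i \in V_K^\xi$, and the zero elements of $\Phi_K^\xi$ are trivially in $V_K^\xi$.

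For the reverse containment I will unwind the recursion. Because the iterates are constant on the quiet blocks between consecutive $\ell_r$'s and move by a nonzero multiple of $\phi_{\ell_r}$ at each active step, iteration gives $y_{\xi + k} = y_\xi - \sum_{r:\ell_r < \xi + k} c_r \phi_{\ell_r}$ with nonzero scalars $c_r$, so $V_k^\xi = \linspan{y_\xi, \phi_{\ell_r} : \ell_r < \xi + k}$; in particular $V_{\ell_r - \xi}^\xi = \linspan{y_\xi, \phi_{\ell_1}, \ldots, \phi_{\ell_{r-1}}}$ for each $1 \le r \le m$. The minimality in the definition of $\nu(\xi)$ then forces $y_{\ell_r + 1} \notin V_{\ell_r - \xi}^\xi$ for each $r < m$, and the same recursion gives $\phi_{\ell_r} \notin \linspan{y_\xi, \phi_{\ell_1}, \ldots, \phi_{\ell_{r-1}}}$. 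Induction on $r$ will then show that the list $y_\xi, \phi_{\ell_1}, \ldots, \phi_{\ell_{m-1}}$ is linearly independent, where the degenerate case $y_\xi = 0$ trivializes the claim because then every iterate is zero and every $\chi$ vanishes.

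It remains to incorporate the terminal index $\ell_m$. At that point $V_K^\xi = \linspan{y_\xi, \phi_{\ell_1}, \ldots, \phi_{\ell_{m-1}}}$ has dimension $m$ and $\phi_{\ell_m}$ belongs to it by the first step. Writing $y_{\xi + K + 1} = y_\xi - \sum_{r=1}^m c_r \phi_{\ell_r}$ and simultaneously expanding $y_{\xi + K + 1}$ in the basis $y_\xi, \phi_{\ell_1}, \ldots, \phi_{\ell_{m-1}}$, a coefficient comparison will produce $\phi_{\ell_m}$ as a combination of $y_\xi$ and the earlier $\phi$'s in which the $y_\xi$-coefficient must be nonzero; this will show both that the nonzero elements of $\Phi_K^\xi$ are linearly independent and that $y_\xi \in \linspan{\Phi_K^\xi}$, yielding $V_K^\xi = \linspan{\Phi_K^\xi}$. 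The main obstacle will be precisely this terminal step: the minimality that powered the induction has no analogue at $r = m$, so I will need to invoke the stopping criterion together with the consistency assumption \eqref{assumption-consistency} (which constrains how $y_\xi$ interacts with $\range{M'}$) to rule out the pathological possibility of a vanishing $y_\xi$-coefficient and so close the argument.
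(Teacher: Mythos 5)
Your proposal follows the same broad arc as the paper's proof (establish $\linspan{\Phi_{\nu(\xi)}^\xi} \subset V_{\nu(\xi)}^\xi$, exhibit a generating set for $V_{\nu(\xi)}^\xi$ consisting of one iterate plus the earlier search directions, and then show the iterate can be traded for the terminal direction $M'w_{\xi+\nu(\xi)}$), but the terminal step — which you yourself flag as the ``main obstacle'' — is left genuinely open, and the tools you propose to close it with are not the right ones. In your non-orthogonal basis $\lbrace y_\xi, \phi_{\ell_1},\ldots,\phi_{\ell_{m-1}}\rbrace$, the bad case (the $y_\xi$-coefficient of $\phi_{\ell_m}$ vanishing) reduces to $\phi_{\ell_m} \in \linspan{\phi_{\ell_1},\ldots,\phi_{\ell_{m-1}}}$, and nothing in your setup contradicts this: the only leverage you have on $\phi_{\ell_m}$ is $\chi_{\xi+\nu(\xi)}\neq 0$, i.e.\ $\phi_{\ell_m}$ is not orthogonal to $y_{\xi+\nu(\xi)}$, but the span of the earlier $\phi_{\ell_r}$'s is in general \emph{not} orthogonal to $y_{\xi+\nu(\xi)}$ (only $\phi_{\ell_{m-1}}$ is), so membership in that span yields no contradiction. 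Consistency \eqref{assumption-consistency} does not help here either; it is only used to set up the recursion \eqref{general-recursion} in the first place.

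The paper closes this step with two moves you are missing. First, the generating set is anchored at the \emph{last} iterate $y_{\xi+\nu(\xi)}$ rather than at $y_\xi$ (one checks $y_{\xi+\nu(\xi)} \in \linspan{y_{\xi+\nu(\xi)+1}, M'w_{\xi+\nu(\xi)}\chi_{\xi+\nu(\xi)}}$ and works backwards, so $V_{\nu(\xi)}^\xi = \linspan{y_{\xi+\nu(\xi)}, M'w_\xi\chi_\xi,\ldots,M'w_{\xi+\nu(\xi)-1}\chi_{\xi+\nu(\xi)-1}}$). Second, Gram--Schmidt is applied to replace the earlier directions by vectors $\phi_1,\ldots,\phi_{r-1}$ \emph{orthogonal} to $y_{\xi+\nu(\xi)}$. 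Then, expanding $y_{\xi+\nu(\xi)+1} = c_0 y_{\xi+\nu(\xi)} + \sum_j c_j\phi_j$ and comparing with the update formula, the case $c_0 = 1$ forces $M'w_{\xi+\nu(\xi)} \in \linspan{\phi_1,\ldots,\phi_{r-1}} \perp y_{\xi+\nu(\xi)}$, i.e.\ $w_{\xi+\nu(\xi)}'My_{\xi+\nu(\xi)} = 0$, contradicting $\chi_{\xi+\nu(\xi)}\neq 0$ from the definition of $\nu(\xi)$; the case $c_0\neq 1$ gives the desired substitution. Without the re-anchoring and the orthogonalization, your coefficient comparison cannot reach this contradiction, so as written the argument does not go through.
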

\begin{proof}
By \eqref{general-recursion}, $M' w_{\xi+k} \chi_{\xi+k} \in \linspan{ y_{\xi+k+1} - y_{\xi+k} } \subset V_{\nu(\xi)}^\xi$ for all integers $k \in [0, \nu(\xi)]$. Therefore, $\linspan{ \Phi_{\nu(\xi)}^\xi} \subset V_{\nu(\xi)}^\xi$. We now prove set inclusion in the other direction. 

Let $\Phi = \lbrace y_{\xi + \nu(\xi) }, M'w_\xi \chi_\xi, M' w_{\xi+1} \chi_{\xi+1},\ldots,$ $M'w_{\xi+\nu(\xi)-1} \chi_{\xi+\nu(\xi)-1} \rbrace$. By \eqref{general-recursion}, $\linspan{ y_{k+1}, M'w_k \chi_k }$ contains $y_k$. By this fact and since $\Phi_{\nu(\xi)}^\xi \subset V_{\nu(\xi)}^\xi$, $V_{\nu(\xi)}^\xi = \linspan{ \Phi }$. Therefore, we can prove the result if we can replace $y_{\xi + \nu(\xi)}$ with the direction $M'w_{\xi + \nu(\xi)} \chi_{\xi + \nu(\xi)}$ in the generating set $\Phi$.

We first note that $y_{\xi + \nu(\xi)} \neq 0$. Indeed, if this were true, then $y_{\xi + \nu(\xi) + 1} = y_{\xi + \nu(\xi)}$, which would contradict $\chi_{\xi + \nu(\xi)} \neq 0$ in the definition of $\nu$ under the assumption that $\nu(\xi)$ is finite.

Next, we note that the nonzero terms in $\Phi$ are linearly independent. Suppose this were not true, then $y_{\xi + \nu(\xi)}$ is in the span of the remaining terms in $\Phi$. Since $\Phi_{\nu(\xi)-1}^\xi \subset V_{\nu(\xi)-1}^\xi$, this would imply that $y_{\xi + \nu(\xi) } \in V_{\nu(\xi)-1}^\xi$, which contradicts the minimality of $\nu(\xi)$.

Let $r = \dim ( V_{\nu(\xi)}^\xi )$. Given the linear independence of the nonzero terms in $\Phi$, we can use the Gram-Schmidt procedure to construct the orthogonal set of vectors, $\lbrace y_{\xi + \nu(\xi)}, \phi_1,\ldots,\phi_{r-1} \rbrace$, whose span is that of $\Phi$. Since $y_{\xi + \nu(\xi) + 1} \in V_{\nu(\xi)}^\xi$ by construction, there exist scalars $\lbrace c_i : i = 0,\ldots,r-1 \rbrace$ such that
\begin{equation}
c_0 y_{\xi+ \nu(\xi)} + \sum_{j=1}^{r-1} c_j \phi_j = y_{\xi+\nu(\xi)+1} = y_{\xi + \nu(\xi)} - M' w_{\xi + \nu(\xi)} \frac{w_{\xi + \nu(\xi)}'M y_{\xi + \nu(\xi)}}{\norm{ M' w_{\xi + \nu(\xi)}}_2^2}.
\end{equation}
If $c_0 \neq 1$, then $y_{\xi+ \nu(\xi)} \in \linspan{ \phi_1,\ldots,\phi_{r-1}, M' w_{\xi + \nu(\xi) }}$, which implies the result. If $c_0 = 1$, then $M'w_{\xi + \nu(\xi)}$, is in the span of $\lbrace \phi_1,\ldots,\phi_{r-1} \rbrace$, which is orthogonal to $y_{\xi+\nu(\xi)}$. This implies that $\chi_{\xi+\nu(\xi)} = 0$, which contradicts the definition of $\nu(\xi)$. The first part of the result follows. The second part follows, by the first part and the linear independence of the nonzero terms in $\Phi$. 
\end{proof}

Thus, combining Lemma \ref{lemma-revisit-characterization} and Theorem \ref{theorem-meany}, we can guarantee a reduction (in norm) from $y_{\xi}$ to $y_{\xi+\nu(\xi)+1}$, \textit{so long as $\nu(\xi)$ is well-behaved}. The following definitions and subsequent lemma ensure that $\nu(\xi)$ is well-behaved. Specifically, the first definition specifies that the procedure forgets the past after a fixed number of iterations.

\begin{definition}[$N$-Markovian] \label{assumption-n-markovian}
An adaptive method is $N$-Markovian for $N\in \mathbb{N}$ if, for any measurable sets $W$ and $Z$ with respect to $w_k$ and $\zeta_k$, 
\begin{equation} 
\condPrb{ w_{k} \in W, \zeta_k \in Z }{ \mathcal{F}^k} = \condPrb{ w_k \in W, \zeta_k \in Z}{ \mathcal{F}_N^k}.
\end{equation}
\end{definition}

Note, the definition of $N$-Markovian does not explicitly include the case where $w_k$ is generated independently of $x_k$. However, the definition implicitly includes this case since such a situation is a special case of being $1$-Markovian. More generally, a procedure that is $N$-Markovian is also $(N+1)$-Markovian.

The next definition specifies that the procedure actually updates the iterate within this window with some nonzero probability. Indeed, the next definition prevents the iterates from being confined to a subspace that is distinct from the solution set.

\begin{definition}[Exploratory] \label{assumption-nontrivial-decay}
An adaptive, $N$-Markovain method is exploratory if
\begin{equation} 
\exists \pi \in (0,1],~ \forall V \subsetneq \row(M) : \sup_{\substack{y_0 \in V \setminus \lbrace 0 \rbrace \\ \zeta_{-1}}} \condPrb{ \cap_{j=0}^{N-1} \lbrace M'w_j \perp V \rbrace  }{\mathcal{F}^0} \leq 1 - \pi.
\end{equation}
\end{definition}

As we will show in \S \ref{section-applications}, both of these definitions are verifiable for a host of procedures, which allows them to be used in the design and development of adaptive, randomized linear solvers. For now, we show the consequence of a procedure that satisfies these two definitions.

\begin{lemma} \label{lemma-finite-revisit-time}
Let $\xi$ be an arbitrary, finite stopping time with respect to $\lbrace \mathcal{F}^k \rbrace$, and let $\mathcal{F}^{\xi}$ denote the stopped $\sigma$-algebra. Under \eqref{assumption-consistency}, Definition \ref{assumption-n-markovian} with $N \in \mathbb{N}$, and Definition \ref{assumption-nontrivial-decay} with $\pi \in (0,1]$, if $y_{\xi} \neq 0$, then $\nu(\xi)$ is finite and $\cond{ \nu(\xi) }{\mathcal{F}^\xi} \leq  N \times \rnk(M) / \pi$ with probability one.
\end{lemma}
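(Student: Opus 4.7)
The plan is to reduce bounding $\nu(\xi)$ to counting productive iterations (those with $\chi_{\xi+k}\neq 0$), cap that count at $\rnk(M)$ using a dimension-growth argument, and then use Exploratory together with $N$-Markovianity to show that each length-$N$ block after $\xi$ contains a productive iteration with conditional probability at least $\pi$.

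\textbf{Step 1: reduction to counting productive iterations.} First I would observe that $y_k\in\row(M)$ for every $k\geq 0$: in either of the two cases following \eqref{general-recursion}, consistency places $y_\xi\in\row(M)$, and each increment $y_{k+1}-y_k$ is a scalar multiple of $M'w_k$, which itself lies in $\row(M)$. Hence $V_k^\xi\subset\row(M)$, so $\dim V_k^\xi\leq r:=\rnk(M)$, and $\dim V_{k+1}^\xi-\dim V_k^\xi\in\{0,1\}$, equal to $1$ exactly when $y_{\xi+k+1}\notin V_k^\xi$. By the definition of $\nu(\xi)$, for each $k<\nu(\xi)$ we are in one of two mutually exclusive cases: either $\chi_{\xi+k}=0$, so $y_{\xi+k+1}=y_{\xi+k}\in V_k^\xi$ and the dimension is unchanged; or $\chi_{\xi+k}\neq 0$, in which case the minimality in the definition of $\nu$ forces $y_{\xi+k+1}\notin V_k^\xi$, and the dimension strictly increases. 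Since $\dim V_0^\xi=1$ (because $y_\xi\neq 0$), at most $r-1$ productive iterations can occur strictly before $\nu(\xi)$, and the iteration at $\nu(\xi)$ is itself productive. Writing $T_r$ for the time of the $r$-th productive iteration after $\xi$ (with $T_r=\infty$ if fewer than $r$ ever occur), this yields $\nu(\xi)\leq T_r$.

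\textbf{Step 2: per-block productive probability.} Next I would partition the iterations after $\xi$ into blocks of length $N$, where block $m$ consists of iterations $\xi+mN,\ldots,\xi+(m+1)N-1$, and let $Z_m$ be the indicator that at least one $j$ in block $m$ satisfies $\chi_{\xi+j}\neq 0$. The key claim is that, on the event $\{y_{\xi+mN}\neq 0\}$, $\Prb{Z_m=0\mid\mathcal{F}^{\xi+mN}}\leq 1-\pi$. The argument is contrapositive: if no productive step occurs in block $m$, then $y_{\xi+j}=y_{\xi+mN}$ throughout the block, and $\chi_{\xi+j}=0$ rewrites as $M'w_{\xi+j}\perp V$ with $V:=\linspan{y_{\xi+mN}}$, a one-dimensional proper subspace of $\row(M)$ in the generic case $r\geq 2$. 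Invoking Exploratory on the subprocess shifted to start at time $\xi+mN$---which the $N$-Markov property together with the time-homogeneous form of $\varphi$ in \eqref{row-action-complete}--\eqref{column-action-complete} makes legitimate---then delivers the stated $1-\pi$ bound.

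\textbf{Step 3: tail and expectation.} Finally I would bound $\nu(\xi)\leq T_r\leq NB$, where $B:=\inf\{\ell\geq 1:\sum_{m=0}^{\ell-1}Z_m\geq r\}$: since each $Z_m=1$ contributes at least one productive iteration, $r$ such blocks force the $r$-th productive iteration to have occurred by block $B-1$. By Step 2 and iterated conditioning, $\{Z_m\}$ stochastically dominates independent $\mathrm{Bernoulli}(\pi)$ trials so long as $y_{\xi+mN}\neq 0$, so $B$ is dominated by a negative-binomial random variable with $r$ successes and success probability $\pi$, hence has conditional mean at most $r/\pi$. This yields both $\nu(\xi)<\infty$ with probability one and $\cond{\nu(\xi)}{\mathcal{F}^\xi}\leq Nr/\pi$. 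The main obstacle I foresee is Step 2: the careful shift of Exploratory, which is stated with respect to $\mathcal{F}^0$, to the random time $\xi+mN$ via $N$-Markovianity. A subsidiary edge case is $r=1$, where $\linspan{y_{\xi+mN}}$ can coincide with $\row(M)$ so the ``proper subspace'' hypothesis in Exploratory is not directly available; this case can either be handled by a short direct argument or by noting that once $V_{mN}^\xi=\row(M)$, the very first productive step automatically triggers $\nu(\xi)$.
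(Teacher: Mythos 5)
Your proposal is correct and follows essentially the same route as the paper's proof: bound the number of productive iterations (those with $\chi_{\xi+k}\neq 0$) before $\nu(\xi)$ by $\rnk(M)$ via the dimension-growth of $V_k^\xi$ inside $\row(M)$, then use Exploratory plus $N$-Markovianity to show each length-$N$ block is productive with conditional probability at least $\pi$, and combine via a geometric/negative-binomial comparison to get $\cond{\nu(\xi)}{\mathcal{F}^\xi}\leq N\rnk(M)/\pi$. If anything, you are more explicit than the paper about the two delicate points (shifting Exploratory to the random time $\xi+mN$, and the degenerate case $\linspan{y_{\xi+mN}}=\row(M)$), both of which the paper's proof passes over silently.
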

\begin{proof}
Since $y_{\xi} \neq 0$, Definition \ref{assumption-nontrivial-decay} implies $\condPrb{s(\xi) \geq \ell N}{\mathcal{F}^\xi} \leq (1 - \pi)^\ell$. Comparing to a geometric process, $\cond{ s(\xi) }{\mathcal{F}^\xi} \leq N/\pi$. Accordingly, for any $j \in \mathbb{N}$, if $y_{\xi+ s_1 + \cdots + s_{j-1}} \neq 0$, we can define $s_j = s( \xi+s_1 + \cdots + s_{j-1} )$. Then, 
\begin{equation}
\cond{ s_1 + \cdots + s_j }{\mathcal{F}^\xi} \leq j N/\pi.
\end{equation}

Moreover, for any $\tau \in \lbrace \sum_{i=1}^j s_i : j \in \mathbb{N} \rbrace$ either (Case 1) $V_{\tau}^\xi = V_{\tau+1}^\xi$ or (Case 2) $\dim( V_{\tau}^\xi )+1 = \dim ( V_{\tau+1}^\xi)$. Given that $V_{k}^\xi \subset \row(M)$, we see that the second case can only happen at most $\rnk(M)-1$ times before the first case must be true.
With this fact and by definition of $\nu(\xi)$, $\nu(\xi) \in \lbrace \sum_{i=1}^j s_i : j = 1,\ldots, \rnk(M) \rbrace$. Thus, $\cond{\nu(\xi)}{\mathcal{F}^\xi} \leq N \rnk(M) / \pi$.
\end{proof}

We now combine the above results to characterize the behavior of $\lbrace y_k \rbrace$ for a general adaptive, randomized procedures. Note, when $y_k$ is zero or $\lbrace y_k \rbrace$ converges to zero, this is equivalent to $x_k$ being equal to, or converging to, a solution of the system, respectively.

\begin{theorem} \label{theorem-control}
Let $A \in \mathbb{R}^{n\times d}$ and $b \in \mathbb{R}^n$ be a consistent system \eqref{assumption-consistency}. Moreover, suppose $x_0 \in \mathbb{R}^d$ is not a solution to the system (i.e., $Ax_0 \neq b$) and $\lbrace x_k : k \in \mathbb{N} \rbrace$ is generated by either \eqref{row-action-complete} or \eqref{column-action-complete} satisfying Definition \ref{assumption-n-markovian} with $N \in \mathbb{N}$, and Definition \ref{assumption-nontrivial-decay} with $\pi \in (0,1]$. Let $\lbrace y_k \rbrace$ be defined as in \eqref{general-recursion}. Then, there exist stopping times, $\lbrace \tau_k \rbrace$, with $\E{ \tau_k } \leq k[1 + N\rnk(A)/\pi]$, and random variables $\lbrace \gamma_k \in [0,1) \rbrace$ such that $ \norm{ y_{\tau_{k+1}} }_2^2 \leq \gamma_k \norm{ y_{\tau_k} }_2^2$.
\end{theorem}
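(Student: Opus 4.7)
I will construct $\lbrace \tau_k \rbrace$ recursively. Let $\tau_0 = 0$; given $\tau_k$, set
\[
\tau_{k+1} = \tau_k + \nu(\tau_k) + 1 \quad \text{if } y_{\tau_k} \neq 0, \qquad \tau_{k+1} = \tau_k \quad \text{otherwise}.
\]
Since the event $\lbrace \nu(\tau_k) = \ell \rbrace$ is $\mathcal{F}^{\tau_k + \ell + 1}$-measurable, each $\tau_{k+1}$ is a stopping time with respect to $\lbrace \mathcal{F}^j \rbrace$.

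To control $\E{\tau_k}$, I apply Lemma \ref{lemma-finite-revisit-time} with $\xi = \tau_k$: on the event $\lbrace y_{\tau_k} \neq 0 \rbrace$ the lemma delivers $\cond{\nu(\tau_k)}{\mathcal{F}^{\tau_k}} \leq N \rnk(M)/\pi$ a.s., so $\cond{\tau_{k+1} - \tau_k}{\mathcal{F}^{\tau_k}} \leq 1 + N\rnk(M)/\pi$, and the increment vanishes off this event. Taking total expectations, iterating, and using $\rnk(M) = \rnk(A)$ in both \eqref{row-action-complete} and \eqref{column-action-complete}, I obtain $\E{\tau_k} \leq k[1 + N\rnk(A)/\pi]$. (In particular $\tau_k < \infty$ a.s., which justifies the recursive construction.)

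For the contraction, fix an outcome with $y_{\tau_k} \neq 0$; by Lemma \ref{lemma-finite-revisit-time}, $\nu(\tau_k)$ is finite, so Lemma \ref{lemma-revisit-characterization} yields $V_{\nu(\tau_k)}^{\tau_k} = \linspan{\Phi_{\nu(\tau_k)}^{\tau_k}}$ and the linear independence of the nonzero elements of $\Phi_{\nu(\tau_k)}^{\tau_k}$. In particular, $y_{\tau_k} \in \linspan{\Phi_{\nu(\tau_k)}^{\tau_k}}$. The recursion \eqref{general-recursion} writes $y_{\tau_{k+1}}$ as the product of $\nu(\tau_k)+1$ Meany-type projection factors applied to $y_{\tau_k}$, so Theorem \ref{theorem-meany} with $j = \tau_k$ and $k = \nu(\tau_k)$ gives
\[
\norm{y_{\tau_{k+1}}}_2^2 \leq \left[ 1 - \min_{G \in \mathcal{G}} \det(G'G) \right] \norm{y_{\tau_k}}_2^2.
\]
I then define $\gamma_k := 1 - \min_{G \in \mathcal{G}} \det(G'G)$ on $\lbrace y_{\tau_k} \neq 0 \rbrace$ and $\gamma_k := 0$ elsewhere (where the bound is trivial). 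To verify $\gamma_k \in [0,1)$, note that each $G \in \mathcal{G}$ has unit-norm columns (by the normalisation inside $\Phi$) that are linearly independent; Hadamard's inequality therefore forces $0 < \det(G'G) \leq 1$.

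The main obstacle is orchestrating the two auxiliary lemmas at a random time: the proof depends on applying Lemma \ref{lemma-finite-revisit-time} and Lemma \ref{lemma-revisit-characterization} with $\xi = \tau_k$, which is precisely why those lemmas were stated for arbitrary finite stopping times conditioning on $\mathcal{F}^{\xi}$. Once that conditioning is in place, the remaining argument is the bookkeeping above: bound the expected renewal time by the conditional bound on $\nu(\tau_k)$, and splice the subspace characterisation into Meany's inequality to obtain the per-block contraction.
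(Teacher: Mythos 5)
Your proposal is correct and follows essentially the same route as the paper: the same recursive construction $\tau_{k+1}=\tau_k+\nu(\tau_k)+1$ on $\lbrace y_{\tau_k}\neq 0\rbrace$, the expectation bound via Lemma \ref{lemma-finite-revisit-time}, and the contraction via Lemma \ref{lemma-revisit-characterization} plus Theorem \ref{theorem-meany} with Hadamard's inequality giving $\gamma_k\in[0,1)$. The only cosmetic difference is that on the event $y_{\tau_k}=0$ the paper advances $\tau$ by one per step while you freeze it; both choices satisfy the stated bounds.
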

\begin{proof}
The proof proceeds by induction. Let $\tau_0 = 0$. For the induction hypothesis, we assume that $\E{ \tau_{j} }  \leq j[1 + N  \rnk(A)/\pi]$. There are two cases: $y_{\tau_j} = 0$ or $y_{\tau_j} \neq 0$. In the former case, $y_{\tau_j+k} = 0$ for all $k \in \mathbb{N}$. Therefore, we define $\tau_{j+k} = \tau_j +k$ and $\gamma_{j+k-1} =0$ for all $k \in \mathbb{N}$, and the result follows. In the second case, let $\tau_{j+1} = \nu( \tau_j) + \tau_j + 1$. By the induction hypothesis and Lemma \ref{lemma-finite-revisit-time}, $\E{ \tau_{j+1} } \leq (j+1) [1 + N\rnk(A)/\pi]$. Moreover, by  Lemma \ref{lemma-revisit-characterization}, $y_{\tau_{j+1}} \in V_{\nu(\tau_j)}^{\tau_j}$ and $V_{\nu(\tau_j)}^{\tau_j} = \linspan{ \Phi_{\nu(\tau_j)}^{\tau_j} }$. Therefore, by Theorem \ref{theorem-meany}, $\norm{y_{\tau_{j+1}}}_2^2 \leq \gamma_j \norm{ y_{\tau_j} }_2^2$, where $\gamma_j$ is given by the right hand side of \eqref{equation-meany} and is in $[0,1)$ by Hadamard's inequality. Thus, the result follows by induction.
\end{proof}

Note, in the proof of Theorem \ref{theorem-control}, we identify two cases: finite termination and an infinite sequence of $\lbrace \norm{y_k}_2 \rbrace$. In the latter case, we have not guaranteed convergence of the sequence, and, in order to do so, we must ensure the event of $\gamma_k \to 1$ as $k \to \infty$ has probability zero. While we will present a general way of ensuring that this event holds with probability zero, we begin with a more special situation that includes the case where $\lbrace w_k \rbrace$ are standard basis elements \cite{agmon1954,motzkin1954,strohmer2009,zouzias2013,bai2018,haddock2019,steinerberger2020}.

\begin{corollary} \label{corollary-finite-convergence}
Suppose the setting of Theorem \ref{theorem-control} holds. Moreover, suppose that $\lbrace M'w_k \rbrace$ belong to a finite set. Then, $\exists \gamma \in [0,1)$ such that with probability one and for all $k \in \mathbb{N}$,
\begin{equation}
\norm{ y_{\tau_{k}} }_2^2 \leq \gamma^k \norm{y_0}_2^2.
\end{equation}
\end{corollary}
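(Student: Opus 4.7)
The plan is to show that the finiteness hypothesis forces the random contraction factors $\gamma_j$ produced in Theorem \ref{theorem-control} to be uniformly bounded away from $1$ by a deterministic constant; the geometric bound then follows immediately by iterating the per-epoch inequality of Theorem \ref{theorem-control}.

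First, I would extract from the proof of Theorem \ref{theorem-control} the explicit identity
\begin{equation*}
\gamma_j = 1 - \min_{G \in \mathcal{G}_j} \det(G'G),
\end{equation*}
where $\mathcal{G}_j$ is the collection of matrices whose columns are maximal linearly independent subsets of $\Phi_{\nu(\tau_j)}^{\tau_j}$. Every column of every such $G$ is a unit vector of the form $M'w_k / \norm{M'w_k}_2$ for some iteration $k$ with $\chi_k \neq 0$, so each column lies in the (automatically finite) set $\tilde S = \{ s/\norm{s}_2 : s \in \{M'w_k : k \in \mathbb{N}\},\, s \neq 0\}$.

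Next, I would observe that the number of columns of any $G \in \mathcal{G}_j$ is at most $\rnk(M) \leq \rnk(A)$, so the union
\begin{equation*}
\mathcal{H} = \bigcup_{r=1}^{\rnk(A)} \{G : \text{the columns of } G \text{ are } r \text{ linearly independent elements of } \tilde S\}
\end{equation*}
is a finite superset of $\bigcup_j \mathcal{G}_j$. For each $G \in \mathcal{H}$, linear independence of its columns forces $\det(G'G) > 0$, so finiteness of $\mathcal{H}$ yields a deterministic $\delta > 0$ with $\delta \leq \det(G'G)$ for all $G \in \mathcal{H}$. Consequently $\gamma_j \leq 1 - \delta =: \gamma \in [0,1)$ with probability one, for every $j$.

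The only substantive step, and the one I expect to be the main obstacle, is recognizing that the random families of Gram matrices arising from Meany's inequality are all drawn from a single finite collection once $\{M'w_k\}$ is finite; the rest is mechanical. Specifically, iterating $\norm{y_{\tau_{j+1}}}_2^2 \leq \gamma_j \norm{y_{\tau_j}}_2^2 \leq \gamma \norm{y_{\tau_j}}_2^2$ from Theorem \ref{theorem-control} for $j = 0, 1, \ldots, k-1$ delivers the claimed bound $\norm{y_{\tau_k}}_2^2 \leq \gamma^k \norm{y_0}_2^2$.
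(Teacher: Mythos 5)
Your proposal is correct and follows essentially the same route as the paper: the finiteness of $\lbrace M'w_k \rbrace$ makes the collection of candidate Gram matrices finite, so the minimum of $\det(G'G)$ over that collection is a deterministic positive number, giving a uniform $\gamma = 1 - \delta < 1$ that bounds every $\gamma_j$, after which the bound follows by iterating Theorem \ref{theorem-control}. If anything, your $\mathcal{H}$ (all linearly independent subsets of size at most $\rnk(A)$, rather than only the maximal ones) is a slightly more careful choice of finite superset, since the matrices arising at a given epoch are maximal only relative to $\Phi_{\nu(\tau_j)}^{\tau_j}$ and not necessarily relative to the full finite set.
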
 
\begin{proof}
By Theorem \ref{theorem-control}, it is enough to show that there exists a $\gamma \in [0,1)$ such that $\Prb{ \gamma_k \leq \gamma } = 1$ for all $k+1 \in \mathbb{N}$. Let $\lbrace h_1,\ldots,h_q \rbrace$ denote the finite set in which $\lbrace M'w_k \rbrace$ take value. Let $\mathcal{H}$ be the set of all matrices $H$ whose columns are maximally linearly independent subsets of $\lbrace h_1/\norm{h_1}_2,\ldots,h_q/\norm{h_q}_2 \rbrace$. Then,
\begin{equation}
\gamma_k \leq 1 - \inf_{ H \in \mathcal{H}} \det(H'H).
\end{equation}
Since $\lbrace h_1,\ldots,h_q \rbrace$ is finite, $\mathcal{H}$ is finite. This implies that the infimum exists. So, we can define $\gamma$ to be the right hand side, and since all $H \in \mathcal{H}$ are have full column rank, $\gamma \in [0,1)$.
\end{proof}

If we do not have the finiteness assumed in Corollary \ref{corollary-finite-convergence}, we need to find another way to control $\lbrace \gamma_k \rbrace$ regardless of the evolution of $\lbrace y_k \rbrace$. The following definition provides one rather generic way of ensuring this behavior. 

\begin{definition}[Uniformly Controlled in Expectation] \label{assumption-det-control}
Suppose $\varphi$ is $N$-Markovian and exploratory for some $\pi \in (0,1]$. 
For any $y_0$, let $\tilde G$ denote the matrix whose columns are the normalized, unique (by Lemmas \ref{lemma-revisit-characterization} and \ref{lemma-finite-revisit-time}), maximal linearly independent subset of $\Phi_{\nu(N-1)}^{N-1}$. Then $\varphi$ is uniformly controlled in expectation if
\begin{equation} 
\exists g \in (0,1] : \inf_{\substack{y_0 \in \row(M) \setminus \lbrace 0 \rbrace \\ \zeta_{-1}}} \cond{ \det( \tilde G ' \tilde G )}{\mathcal{F}^0} \geq g ~ (w.p.1.).
\end{equation}
\end{definition}

Now, with this definition we can prove that the iterates will converge to zero and provide a rather coarse, limiting rate of convergence.

\begin{corollary} \label{corollary-convergence}
Suppose the setting of Theorem \ref{theorem-control} holds, and suppose Definition \ref{assumption-det-control} holds for some $g \in (0,1]$. Then, with probability one, for any $\delta \in ( 1-g,1)$, there exists a finite stopping time $L$ such that for any $\ell \geq L$,
\begin{equation}
\norm{ y_{\tau_{\ell N } }}_2^2 \leq \delta^\ell \norm{ y_0}_2^2.
\end{equation}
Consequently, $\Prb{ \lim_{k \to \infty} \norm{y_k}_2 = 0 } = 1$.
\end{corollary}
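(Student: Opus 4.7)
The plan is to upgrade the in-expectation contraction from Theorem \ref{theorem-control} to an almost-sure geometric rate along the subsequence $(\tau_{\ell N})_{\ell \geq 0}$ and then invoke the first Borel--Cantelli lemma. First, I would iterate Theorem \ref{theorem-control}'s recursion to get $\norm{y_{\tau_{\ell N}}}_2^2 \leq \bigl(\prod_{j=0}^{\ell N - 1}\gamma_j\bigr)\norm{y_0}_2^2$, and use Lemmas \ref{lemma-revisit-characterization} and \ref{lemma-finite-revisit-time} to identify each factor as $\gamma_j = 1 - \det(\tilde G_j' \tilde G_j)$, where $\tilde G_j$ is the essentially unique normalized, maximal linearly independent subset of $\Phi_{\nu(\tau_j)}^{\tau_j}$.

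The crux will be establishing $\E{\prod_{j=0}^{\ell N -1}\gamma_j} \leq (1 - g)^\ell$ from Definition \ref{assumption-det-control}. I would partition the product into $\ell$ consecutive blocks of length $N$, and within each block use the strong Markov property at $\tau_{\ell N}$ together with $N$-Markovianity (Definition \ref{assumption-n-markovian}) to exhibit a single distinguished index $j$ in that block---aligned with the $N-1$ burn-in built into Definition \ref{assumption-det-control}---at which the conditional bound $\cond{\gamma_j}{\mathcal{F}^{\tau_{\ell N}}} \leq 1-g$ holds on $\{y_{\tau_{\ell N}} \neq 0\}$ (the only relevant case, since $y_{\tau_{\ell N}}=0$ makes the corollary trivial). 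The other $\gamma_j$'s within the block I would bound by one. Iterating conditional expectations across blocks then yields $\E{\norm{y_{\tau_{\ell N}}}_2^2} \leq (1-g)^\ell \norm{y_0}_2^2$.

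From there, for any $\delta \in (1-g, 1)$, Markov's inequality gives $\Prb{\norm{y_{\tau_{\ell N}}}_2^2 > \delta^\ell \norm{y_0}_2^2} \leq ((1-g)/\delta)^\ell$, which is summable in $\ell$, so the first Borel--Cantelli lemma produces an almost surely finite $L$ beyond which the displayed inequality holds. To conclude $\Prb{\lim_k \norm{y_k}_2 = 0} = 1$, I would invoke the deterministic monotonicity $\norm{y_{k+1}}_2 \leq \norm{y_k}_2$ (each update in \eqref{general-recursion} is an orthogonal projection onto the hyperplane orthogonal to $M' w_k$), combined with $\norm{y_{\tau_{\ell N}}}_2 \to 0$ and $\tau_{\ell N} \geq \ell N \to \infty$ (from the recursion $\tau_{j+1} \geq \tau_j + 1$ inside the proof of Theorem \ref{theorem-control}). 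The main obstacle is the block-contraction argument: reconciling the $\Phi_{\nu(\tau_j)}^{\tau_j}$ appearing in $\gamma_j$ with the $\Phi_{\nu(N-1)}^{N-1}$ hard-wired into Definition \ref{assumption-det-control} through a careful strong Markov shift, and choosing the right distinguished index within each block at which the shifted bound on the expected determinant is legitimately available.
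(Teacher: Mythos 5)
Your proposal follows essentially the same route as the paper's proof: keep one factor $\gamma_{\ell N}$ per block of $N$ (bounding the rest by one), use the spacing $\tau_{\ell N+1}\leq \tau_{(\ell+1)N}-N+1$ so that Definition \ref{assumption-det-control} applies conditionally to each retained factor, then combine Markov's inequality, Borel--Cantelli, and the monotonicity $\norm{y_{k+1}}_2\leq\norm{y_k}_2$. The argument is correct and matches the paper's.
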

\begin{proof}
We begin by exploring some consequences of Definition \ref{assumption-det-control}. First, let $G_j$ denote the matrix whose columns are the normalized, unique (by Lemmas \ref{lemma-revisit-characterization} and \ref{lemma-finite-revisit-time}), maximal linearly independent subset of $\Phi_{\nu(\tau_j)}^j$. Then, by Definition \ref{assumption-det-control}, with probability one, 
\begin{equation}
\cond{\gamma_j}{\mathcal{F}^{\tau_j-N+1}} = 1 - \cond{ \det(G_j'G_j)}{\mathcal{F}^{\tau_j-N+1}} \leq 1 - g.
\end{equation}

Using this, we would like to control $\mathbb{E}[ \prod_{\ell=0}^j \gamma_{\ell}]$, but we cannot naively make use of conditional expectations given that we have no guarantee of conditional independence between, say, $\gamma_{j-1}$ and $\gamma_j$ because $\gamma_{j-1}$ is measurable with respect to $\mathcal{F}^{\tau_j}$. On the other hand, we do have conditional independence between $\gamma_{\ell}$ and $\gamma_j$ when $\tau_{\ell+1} \leq \tau_{j}-N+1.$

Recall $\tau_{j+1} = \tau_j + 1$ if $y_{\tau_j} = 0$ and $\tau_{j+1} = \tau_j + \nu(\tau_j) + 1$ if $y_{\tau_j} \neq 0$. By this construction, 
\begin{equation}
\tau_{\ell N + 1} \leq \tau_{\ell N + 2} - 1 \leq \cdots \leq \tau_{\ell N + N} - N + 1.
\end{equation}
Therefore, $\gamma_{(\ell + 1 )N}$ is conditionally independent of $\gamma_{\ell N}$ given $\mathcal{F}^{\tau_{ (\ell +1)N} - N + 1}$. Using this fact and Markov's Inequality, for any $m+1 \in \mathbb{N}$,
\begin{equation}
\begin{aligned}
&\Prb{ \prod_{\ell=0}^{m N } \gamma_{\ell} > \delta^{m+1} } 
\leq \Prb{ \prod_{\ell=0}^{m} \gamma_{\ell N} > \delta^{m+1}} \\
&\quad\quad\leq \frac{  \E{ \prod_{\ell=0}^m \gamma_{\ell N} } }{\delta^{m+1}} 
\leq \left( \frac{1-g}{\delta} \right)^{m+1}.
\end{aligned}
\end{equation}

Applying the Borel-Cantelli lemma, there exists a finite stopping time $L$ (depending on $\delta$) such that if $m \geq L$, then $\prod_{\ell=0}^{mN} \gamma_{\ell} \leq \delta^{m+1}$ with probability one. Then, using Theorem \ref{theorem-control} and $\norm{ y_{j+1}}_2 \leq \norm{y_j}_2$, for $m \geq L$, with probability one,
\begin{equation}
\norm{ y_{ \tau_{(m+1) N}} }_2^2 \leq \left( \prod_{\ell = 0}^{m N} \gamma_{\ell} \right) \norm{y_0}_2^2 \leq \delta^{m+1} \norm{y_0}_2^2.
\end{equation}
Moreover, since $\norm{y_{j+1}}_2 \leq \norm{y_j}_2$, convergence of a subsequence of $\lbrace \norm{y_j} \rbrace$ to zero with probability one implies that the sequence converges to zero with probability one. 
\end{proof}

\section{Applications}
\label{section-applications}
Here, we present an example of how the previous results can be used to demonstrate convergence of row-action and column-action solvers for linear systems that are consistent (i.e., satisfying \eqref{assumption-consistency}). We will include many examples from the literature, several novel procedures, and straightforward generalizations.

\subsection{Independent and Identically Distributed} We start by considering row-action and column-action solvers in which $\lbrace w_k \rbrace$ are independent and identically distributed, which includes randomized Kaczmarz \cite{strohmer2009,steinerberger2020}, randomized Coordinate Descent \cite{zouzias2013}, and more general randomized vector sketching methods \cite[\S3.2 with $B=I$]{gower2015}. We now specify the behavior of $\varphi$ and how our results can be applied.

For these independent methods, there is no auxiliary information to track, so we can let $\zeta_k = \emptyset$ for all $k$. Given that $w_k$ are independent and identically distributed,
\begin{equation}
\condPrb{ w_k \in W }{\mathcal{F}^k} = \Prb{ w_k \in W},
\end{equation}
and so such a procedure is certainly $1$-Markovian. 

By \cite[Proposition 1]{PAJAMA2019}, there exists a $\pi \in (0,1)$ such that these methods are Exploratory, as long as the much weaker condition,
\begin{equation}
\forall v \in \row(M) \setminus \lbrace 0 \rbrace,~ \Prb{ v'M'w_0 = 0 } < 1,
\end{equation}
is satisfied. Most common procedures (e.g., randomized Kaczmarz variants, randomized Coordinate Descent variants) will readily satisfy this, as well as most randomized vector sketching procedures. However, there can be choices in which this is not true, in which case $y_k$ will only converge along a subspace of the row space of $M$, which we describe in \cite{PAJAMA2019} for the i.i.d. case. For the rest of this discussion, we will ignore this possibility.

Now, for the methods which are selecting rows or columns (e.g., randomized Kaczmarz variants, randomized Coordinate Descent variants), we can directly apply Corollary \ref{corollary-finite-convergence} to show that such procedures will converge with probability one and provide a worst case rate of convergence. For more general methods (i.e., vector sketches), we need to verify that Definition \ref{assumption-det-control} holds for some $g \in (0,1]$.

\begin{proposition}
Suppose $\lbrace w_k \rbrace$ are independent and identically distributed. Then, there exists a $g \in (0,1]$ such that Definition \ref{assumption-consistency} holds.
\end{proposition}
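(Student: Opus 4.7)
The plan is to exploit the i.i.d.\ structure to reduce Definition \ref{assumption-det-control} to a compactness argument on the unit sphere of $\row(M)$. Since $\lbrace w_k \rbrace$ are i.i.d.\ and $\zeta_k = \emptyset$, the procedure is $1$-Markovian and $\mathcal{F}^0 = \sigma(\zeta_{-1}, x_0)$ carries no randomness beyond specifying $y_0$. Hence $\cond{\det(\tilde G'\tilde G)}{\mathcal{F}^0}$ reduces to a deterministic function $F(y_0) := \E{\det(\tilde G'\tilde G)}$ of $y_0 \in \row(M) \setminus \lbrace 0 \rbrace$, and the goal is to show $\inf F > 0$.

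Next, I would observe that the recursion \eqref{general-recursion} is linearly homogeneous in $y_0$: replacing $y_0$ by $cy_0$ scales every iterate $y_k$ by $c$, leaves each indicator $\chi_k$ unchanged (the condition $w_k' M y_k \neq 0$ is homogeneous in $y_k$), and does not affect the normalized directions $M'w_j/\norm{M'w_j}_2$, which depend only on $w_j$. The spans $V_k^0$ and the stopping time $\nu(0)$ are likewise scale-invariant, so $\tilde G$ and $F$ depend only on the direction of $y_0$. It therefore suffices to take the infimum over the compact unit sphere $S = \lbrace y \in \row(M) : \norm{y}_2 = 1 \rbrace$. Pointwise positivity of $F$ on $S$ then follows immediately from Lemma \ref{lemma-revisit-characterization}: almost surely the columns of $\tilde G$ are linearly independent unit vectors, so $\tilde G'\tilde G$ is positive definite and $\det(\tilde G'\tilde G) > 0$ a.s., giving $F(y_0) > 0$ for every $y_0 \in S$.

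The final step upgrades pointwise positivity to a uniform lower bound. I would establish lower semi-continuity of $F$ on $S$; combined with compactness of $S$ and pointwise positivity, this forces $\inf_{y_0 \in S} F(y_0) > 0$, which is the desired constant $g \in (0,1]$. Lower semi-continuity would follow from Fatou's lemma (using the integrable dominator $\det(\tilde G'\tilde G) \leq 1$) after verifying sample-path-wise that $\liminf_{n} \det(\tilde G(\omega, y_0^{(n)})'\tilde G(\omega, y_0^{(n)})) \geq \det(\tilde G(\omega, y_0)'\tilde G(\omega, y_0))$ for sequences $y_0^{(n)} \to y_0$ in $S$, off a null set of sample paths. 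The main obstacle is exactly this sample-path inequality, because $\chi_k$ and $\nu(0)$ are indicator-valued and may jump as $y_0$ varies; the saving grace is that such jumps occur only on the measure-zero sets $\lbrace y : w_k' M y_k(y) = 0 \rbrace$ for the relevant $k$, so generically the iteration, and hence $\tilde G$, depends continuously on $y_0$. Should this continuity analysis prove too delicate, a robust fallback is to construct directly an event of uniformly positive probability---by iterated application of the Exploratory condition to subspaces $V = \linspan{y_j}$---on which $\tilde G$ takes a specific well-conditioned form, yielding a concrete in-closed-form lower bound for $F(y_0)$.
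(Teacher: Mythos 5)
Your pointwise-positivity step is exactly the paper's proof: the paper observes that, by independence, $\cond{\det(\tilde G'\tilde G)}{\mathcal{F}^0} = \E{\det(\tilde G'\tilde G)}$, and then argues by contradiction that this expectation cannot vanish, since $\det(\tilde G'\tilde G) = 0$ with probability one would force $\tilde G$ to fail to have full column rank, contradicting its construction via Lemma \ref{lemma-revisit-characterization}. The paper stops there. Where you go further is in correctly noting that Definition \ref{assumption-det-control} asks for an \emph{infimum} over all $y_0 \in \row(M)\setminus\lbrace 0\rbrace$, so pointwise positivity of $F(y_0) = \E{\det(\tilde G'\tilde G)}$ does not by itself suffice; the paper's proof elides this. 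Your scale-invariance reduction to the unit sphere is sound (the recursion, the indicators $\chi_k$, the spans $V^0_k$, the stopping time $\nu(0)$, and hence $\tilde G$ are all unchanged under $y_0 \mapsto c y_0$ when the $w_k$ are drawn independently of the iterates), and compactness plus lower semi-continuity of $F$ would indeed close the gap.

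The lower semi-continuity step you defer, however, is a genuine gap, and the justification you sketch does not work as stated. The exceptional sets $\lbrace y : w_k'My_k(y) = 0\rbrace$ are null in $y$ for fixed $\omega$, but lower semi-continuity must hold at \emph{every} $y_0$ on the sphere, including those $y_0$ for which the event that $w_k'My_k(y_0) = 0$ for some relevant $k$ has \emph{positive probability} in $\omega$ --- which happens routinely when the $w_k$ have a discrete distribution, i.e., in the randomized Kaczmarz and Coordinate Descent examples this subsection is meant to cover. On such an event, an arbitrarily small perturbation of $y_0$ flips $\chi_k$ from $0$ to $1$, which changes the membership of $\Phi^0_{\nu(0)}$ and can change $\nu(0)$ itself, so the pathwise inequality $\liminf_n \det(\tilde G(\omega,y_0^{(n)})'\tilde G(\omega,y_0^{(n)})) \geq \det(\tilde G(\omega,y_0)'\tilde G(\omega,y_0))$ is not established off a null set of $\omega$, and Fatou cannot be invoked. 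As written, then, your argument rigorously proves only what the paper's does (pointwise positivity); your fallback --- building a single event of uniformly positive probability, via iterated use of the Exploratory condition, on which $\tilde G$ is well conditioned --- is the more promising route to the uniform bound, but it remains a sketch. (Incidentally, the statement's citation of ``Definition \ref{assumption-consistency}'' is a typo for Definition \ref{assumption-det-control}; you read it correctly.)
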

\begin{proof}
It is easy to verify that the procedure is $1$-Markovian, and that the procedure is Exploratory by \cite[Proposition 1]{PAJAMA2019}. Moreover, since $\lbrace w_k \rbrace$ are independent and identically distributed, 
\begin{equation}
\cond{ \det( \tilde G' \tilde G ) }{\mathcal{F}^0} = \E{ \det( \tilde G' \tilde G )}.
\end{equation}
Suppose for a contradiction $\mathbb{E}[ \det ( \tilde G' \tilde G ) ] = 0$. Then, $ \det( \tilde G' \tilde G) = 0$ with probability one, which implies $\tilde G$ is not full column rank. This contradicts the definition of $\tilde{G}$. The conclusion follows.
\end{proof}

In light of this result, we can apply Theorem \ref{theorem-control} and Corollary \ref{corollary-convergence} to conclude, there exist a $\pi \in (0,1)$, $g \in (0,1]$, a $\delta \in (1-g,1)$, and  finite stopping time $L$ such that, for $\ell \geq L$,
\begin{equation}
\norm{ y_{\tau_{\ell} }}_2^2 \leq \delta^{\ell} \norm{y_0}_2^2,
\end{equation}
where $\mathbb{E}[\tau_{\ell}] \leq \ell( 1 + \rnk (A) / \pi )$. In other words, randomized Kaczmarz variants, randomized Coordinate Descent, and (most interesting) random vector sketching methods with independent and identically distributed values of $\lbrace w_k \rbrace$ will converge with probability one and will do so geometrically along a well-controlled subsequence (asymptotically).

\subsection{Greedy Deterministic}

We now consider methods in which $w_k$ is selected to maximize some outcome given $\mathcal{F}^{k}_1$. For row-action solvers, such methods include selecting the equation with the largest absolute residual \cite{motzkin1954} or greatest distance to a hyperplane defined by a given equation \cite{agmon1954}. For column-action solvers, such methods include selecting the equation with the largest absolute residual for the normal system \cite[\S 3.3.2]{sardy2000}. For these methods, $\zeta_k$ can generally be just the empty set.

Of course, both sets of methods can be generalized by choosing $w_k$ by maximizing some distance or residual function over a finite set of choices. For a row-action example, let $w_k$ be selected from a set of basis vectors, $\lbrace h_1,\ldots,h_n \rbrace$, for $\mathbb{R}^n$ such that
\begin{equation}
w_k \in \argmax_{h \in \lbrace h_1,\ldots,h_n \rbrace} | h'(Ax_k - b) |,
\end{equation}
where we choose the smallest index in case of ties. This example row-action method generalizes the maximum residual method described in \cite{motzkin1954}.

Analogously, for a column-action example, let $w_k$ be selected from a set of basis vector, $\lbrace c_1,\ldots,c_d\rbrace$, for $\mathbb{R}^d$ such that
\begin{equation}  \label{procedure-greedy-col-distance}
w_k \in \argmax_{\substack{c \in \lbrace c_1,\ldots,c_d \rbrace \\ Ac \neq 0}} \frac{|c'A'(Ax_k - b)|}{\norm{Ac}_2},
\end{equation}
where we choose the smallest index in case of ties. This example column-action method is a generalization of \cite[\S 3.3.2]{sardy2000} in the flavor of \cite[\S 3]{agmon1954}.

We demonstrate how to apply our result on this latter example. First, such a method is $1$-Markovian given that the search direction $w_k$ depends only on $x_k$. Second, the following proposition demonstrates that this procedure is exploratory.

\begin{proposition}
The $1$-Markovian procedure described in \eqref{procedure-greedy-col-distance} is exploratory with $\pi = 1$ for a consistent system.
\end{proposition}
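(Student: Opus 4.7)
The plan is to use the fact that the procedure is $1$-Markovian (since $w_k$ depends only on $x_k$), so $N=1$ and the exploratory condition with $\pi = 1$ reduces to showing that for every proper subspace $V \subsetneq \row(M) = \clmn(A)$ and every $y_0 \in V \setminus \lbrace 0 \rbrace$, the greedy choice $w_0$ produces $M'w_0 = Aw_0$ with $Aw_0 \not\perp V$ deterministically. Since $y_0 \in V$, it suffices to produce a single witness, namely $y_0$ itself, by showing $(Aw_0)'y_0 = w_0'A'y_0 \neq 0$.

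The next step is the observation that $A'y_0 \neq 0$. Indeed, $y_0 \in \clmn(A)$, so $y_0 = Au$ for some $u \in \mathbb{R}^d$; if $A'y_0 = A'Au$ vanished, then $\norm{Au}_2^2 = u'A'Au = 0$, forcing $y_0 = 0$, a contradiction. Equivalently, $A'y_0$ is a nonzero element of $\row(A) = \nullsp(A)^\perp$.

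Now I would exploit the argmax structure. Since $\lbrace c_1,\ldots,c_d \rbrace$ is a basis of $\mathbb{R}^d$, the nonzero vector $A'y_0$ has some nonzero coordinate, so there is an index $i$ with $c_i'A'y_0 \neq 0$. For this $i$, the feasibility constraint $Ac_i \neq 0$ is automatic: if $Ac_i = 0$ then $c_i \in \nullsp(A)$, and pairing with $A'y_0 \in \nullsp(A)^\perp$ would force $c_i'A'y_0 = 0$. Hence $c_i$ is in the feasible set of the argmax in \eqref{procedure-greedy-col-distance}, and the objective evaluated at $c_i$ is strictly positive. The chosen $w_0$ realizes a value no smaller than this, so $|w_0'A'y_0|/\norm{Aw_0}_2 > 0$, and in particular $(Aw_0)'y_0 \neq 0$. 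Thus $Aw_0 \not\perp V$ with probability one, giving $\pi = 1$.

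There is no real obstacle here; the only subtlety is ensuring the feasibility constraint $Ac \neq 0$ in the argmax does not eliminate every direction with $c'A'y_0 \neq 0$, and this is settled cleanly by noting that $A'y_0 \in \row(A) = \nullsp(A)^\perp$, which forces the feasibility constraint to be slack exactly where the objective is nonzero.
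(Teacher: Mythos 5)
Your proof is correct and is essentially the contrapositive of the paper's argument: both hinge on the facts that $A'y_0\neq 0$ whenever $0\neq y_0\in\clmn(A)$, that the $c_i$ form a basis so some $c_i'A'y_0\neq 0$, and that the argmax therefore returns a direction with $w_0'A'y_0\neq 0$. You are somewhat more careful than the paper in checking that the feasibility constraint $Ac\neq 0$ cannot exclude the witnessing $c_i$, and in reducing the full subspace condition to the single witness $y_0$, but the underlying idea is the same.
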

\begin{proof}
For a $1$-Markovian procedure, we need to verify, $\exists \pi \in (0,1]$ such that
\begin{equation}
\sup_{y_0 \in \clmn(A)\setminus \lbrace 0 \rbrace } \condPrb{ w_0'A' y_0 = 0}{\mathcal{F}^0} \leq 1 - \pi.
\end{equation}
Since $y_0 = Ax_0 - b$ and the system is consistent, this is equivalent to verifying, $\exists \pi \in (0,1]$ such that
\begin{equation}
\sup_{ x_0 : Ax_0 \neq b } \condPrb{ w_0' A'(Ax_0 - b) = 0}{\mathcal{F}^0} \leq 1 - \pi.
\end{equation}
Suppose now $w_0 ' A' (Ax_0 - b) = 0$. Then, for all $c_i' A'(Ax_0 - b) = 0$ for all $i =1,\ldots,d$. Since $\lbrace c_1,\ldots,c_d \rbrace$ form a basis, then we conclude $w_0' A' (Ax_0 - b) = 0$ if and only if $A' (Ax_0 - b) = 0$. In other words, $Ax_0 - b \in \clmn(A) \cap \clmn(A)^\perp$, which contradicts $Ax_0 \neq b$. Hence, $\condPrb{ w_0' A'(Ax_0 - b) = 0 }{\mathcal{F}^0} = 0$ for any $Ax_0 \neq b$.
\end{proof}

By Corollary \ref{corollary-finite-convergence}, the procedure described in \eqref{procedure-greedy-col-distance} produces iterates that converge to a solution with probability one, and there is a uniform rate of convergence over the subsequence $\lbrace \norm{y_{\tau_{\ell}}}_2 \rbrace$. What is more, $\tau_{\ell} \leq \ell (1 + \rnk(A))$, given that the procedure is deterministic. Nearly identical results can be derived for the other aforementioned procedures.

\subsection{Deterministic and Random Cyclic} 

Consider a procedure in which $\lbrace w_k \rbrace$ deterministically or randomly cycle through a finite set, regardless of the iterates. There are several ways of construing such procedures within our framework. To illustrate, consider a row-action solver that cycles through the rows of a simple $3 \times 3$ system. One way of encapsulating this procedure is to start with $\zeta_{-1}$ as a permutation of the indices of the three equations. Then, we choose $w_0$ to be the equation corresponding to the first entry of the permutation, and we let $\zeta_0$ be the remaining two elements of the permutation. Then, we choose $w_1$ to be the equation corresponding to the first entry of the remaining permutation, and let $\zeta_{1}$ to be the final entry in the permutation. At the next iteration, we select $w_{2}$ to be the equation corresponding to this final entry in the permutation, and let $\zeta_{2}$ be repeated as the original permutation (for a deterministic method) or a random permutation. The procedure then continues in the same pattern.
This first way of encapsulating the procedure is $1$-Markovian given that as long as $\zeta_{k-1}$ is available, $w_k$ and $\zeta_k$ can be selected. 

A second way of encapsulating the procedure is as a $3$-Markovian method. For a deterministic procedure, if $w_{k-2}, w_{k-1}$ then $w_k$ is determined. For a random permutation procedure, $w_0$ is selected randomly from the equations; given $w_0$, $w_1$ is selected from the remaining equations; given $w_0$ and $w_{1}$, $w_2$ is selected to be the remaining equation; $w_3$ is then selected randomly from the equations, regardless of $w_1$ and $w_2$; and the procedure proceeds logically. 

The choice of $N$ in the encapsulation will have an impact on the value of $\pi$ in Definition \ref{assumption-nontrivial-decay}, and, consequently, on the bounds on $\E{\tau_k}$ through the term $N/\pi$ (see Theorem \ref{theorem-control}). For example, if the rows of the $3 \times 3$ matrix in our example are orthogonal, then the $1$-Markovian encapsulation \textit{is not exploratory}. On the other hand, in the $3$-Markovian encapsulation, the procedure is exploratory with $\pi = 1$. Intermediate encapsulations can provide other values of $\pi$. 

For a simple, broad discussion, suppose $\lbrace w_k \rbrace$ cycles through a finite set of size $q$, and let us specify it as a $q$-Markovian method. So long as the span of $M'$ times the vectors in this finite set is equal to $\row(M)$, then the procedure is exploratory with $\pi = 1$, and Corollary \ref{corollary-finite-convergence} can be applied to demonstrate convergence and compute a worse case rate of convergence over a subsequence (though, the bounds on the stopping times can be easily improved). If the span of $M'$ times the finite set is not equal to $\row(M)$, then the procedure will only converge in a subspace of $\row(M)$, which is described in \cite{PAJAMA2019}.

\subsection{Greedy Subsets followed by Randomization}

Consider a procedure akin to greedy deterministic methods that choose multiple equations according to some rule depending on $\mathcal{F}_1^k$, and then randomly selecting from this subset. An example of such a procedure is given in \cite{bai2018}, in which the subset of equations selected have absolute residuals that exceed a threshold that depends on the total residual norm, and then an equation is randomly selected from this subset (with probabilities based on relative residuals). 
For this example, the selected subset is non-empty and contains equations whose residuals are nonzero. Therefore, just as for greedy deterministic procedures, this selection procedure is $1$-Markovian and exploratory with $\pi = 1$. 

Of course, simple generalizations of \cite{bai2018} include changing the basis as we discussed for greedy deterministic procedures, allowing arbitrary probabilities of selection within the greedy subset, cycling through the greedy subset, or changing the threshold function so long as the resulting greedy subset is nonempty. Importantly, our results cover all of these generalizations.

To illustrate, consider a procedure that (1) chooses the greedy subset to be the ten equations with the largest absolute residuals at the current iterate, where ties are broken by choosing the equation with the smallest index; (2) randomly selects an equation from this subset with uniform probability; and (3) updates the iterate using the resulting equation. Clearly, such a procedure is $1$-Markovian, and we now verify that it is exploratory.

\begin{proposition}
For a consistent system, the $1$-Markovian procedure just described is exploratory with $\pi = 1/10$ and uniformly controlled in expectation for some $g \in (0,1]$. 
\end{proposition}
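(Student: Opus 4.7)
The plan is to verify the two required conditions of Definitions \ref{assumption-nontrivial-decay} and \ref{assumption-det-control} by direct arguments, then invoke Theorem \ref{theorem-control} and Corollary \ref{corollary-convergence}. First, I would observe that the procedure is $1$-Markovian: conditional on $x_k$ (equivalently, on $\mathcal{F}_1^k$), the greedy subset is a deterministic function of the current residual $b - Ax_k$, and $w_k$ is obtained from it by fresh, independent randomness.

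For the Exploratory property, I would fix an arbitrary proper subspace $V \subsetneq \row(A)$ and an arbitrary $y_0 \in V \setminus \lbrace 0 \rbrace$. Because $y_0 \in \row(A)$ and $y_0 \neq 0$, the vector $Ay_0$ is nonzero, so at least one component $e_i' A y_0$ of the residual is nonzero. The greedy subset $S$ consists of the ten indices with the largest absolute residual (ties broken by smallest index), and any strictly positive residual outranks every zero residual; hence $S$ must contain at least one index $i^{*}$ with $e_{i^{*}}' A y_0 \neq 0$. Since $w_0$ is drawn uniformly from the ten standard basis vectors indexed by $S$, the conditional probability of selecting such an $i^{*}$ is at least $1/10$. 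Using $y_0 \in V$, the event $\lbrace M' w_0 \perp V \rbrace$ is contained in $\lbrace w_0' A y_0 = 0 \rbrace$, so $\condPrb{M' w_0 \perp V}{\mathcal{F}^0} \leq 9/10$, establishing $\pi = 1/10$.

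For Uniformly Controlled in Expectation (with $N = 1$), the key observation is that $M' w_k$ ranges over the finite set $\lbrace A' e_1, \ldots, A' e_n \rbrace$. By Lemmas \ref{lemma-revisit-characterization} and \ref{lemma-finite-revisit-time}, the normalized matrix $\tilde G$ takes values in a finite collection $\mathcal{H}$ of matrices, each with linearly independent columns, so $\det(H' H) > 0$ for every $H \in \mathcal{H}$. Setting $g := \min_{H \in \mathcal{H}} \det(H' H) > 0$ yields
\[
\cond{\det(\tilde G' \tilde G)}{\mathcal{F}^0} \geq g,
\]
uniformly in $y_0$ and $\zeta_{-1}$, which is exactly Definition \ref{assumption-det-control}.

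The main obstacle I anticipate is the bookkeeping in the Exploratory step: one must confirm that the smallest-index tie-breaking rule never squeezes every nonzero-residual index out of $S$, and one must handle the case where fewer than ten residuals are nonzero. Both situations are resolved by the same simple remark used above, namely that any strictly positive residual is ranked strictly above any zero residual and is therefore guaranteed to lie in the top ten. Once the two definitions are in hand, Theorem \ref{theorem-control} and Corollary \ref{corollary-convergence} deliver convergence with probability one along the subsequence $\lbrace y_{\tau_{\ell N}} \rbrace$ at an asymptotic geometric rate governed by $g$ and $\pi$.
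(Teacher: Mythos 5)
Your proof is correct and follows essentially the same route as the paper for the exploratory claim: since $Ay_0 \neq 0$, at least one of the top-ten residuals is nonzero, so the uniform draw selects a non-orthogonal direction with probability at least $1/10$. You additionally write out the finiteness argument for the uniformly-controlled-in-expectation claim (taking $g = \min_{H \in \mathcal{H}} \det(H'H) > 0$ over the finite set of candidate matrices built from normalized rows of $A$), a step the paper's own proof omits and instead handles by routing the conclusion through Corollary \ref{corollary-finite-convergence}; your addition is correct and harmless.
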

\begin{proof}
By Definition \ref{assumption-nontrivial-decay}, we need to verify that there is a $\pi \in (0,1]$ such that
\begin{equation}
\sup_{ x_0 : Ax_0 \neq b} \condPrb{ w_0'(Ax_0 - b) = 0 }{\mathcal{F}^0} \leq 1 - \pi. 
\end{equation}
By construction, at least one equation must have a nonzero residual (else, $Ax_0 = b$). Thus, the subset of ten equations selected by the procedure has at least one equation whose residual is nonzero. The probability of failing to select this equation from a uniform distribution is no worse than $9/10$. Hence, $\pi = 1/10$.
\end{proof}

With these facts in place, Corollary \ref{corollary-finite-convergence} can be applied to demonstrate convergence and compute a worse case rate of convergence over a well-controlled subsequence. Of course, similar results can be derived for column-action methods. 

\subsection{Random Subsets followed by Greedy Selection} 

Consider a procedure that chooses a random subset of equations and then greedily selects an equation from this subset. An example of such a procedure is given in \cite{haddock2019}, in which a random subset of a fixed size is selected by uniform sampling without replacement, and then the equation with the largest absolute residual within this subset is chosen. Simple generalizations include generating the sampled subset with non-uniform distributions, replacing the residual with maximum distance, or cycling through the randomized subset. We can also generate analogues that correspond to column action methods.

Owing to the greedy component of these procedures, they are $1$-Markovian. Moreover, so long as $x_0$ does not solve the system and the procedure has a nonzero probability of selecting a given equation, the procedure is exploratory and $\pi > 0$ can be determined from the specific sampling procedure. Following the same recipe as before, Corollary \ref{corollary-finite-convergence} can be applied to demonstrate convergence and compute a worse case rate of convergence over a well-controlled subsequence. 

\subsection{Streaming Equations} 

Our results will also apply to problems in which the equations of the system are generated in a streaming fashion. To set up the streaming problem, suppose there is a random variable $\alpha \in \mathbb{R}^d$ and a random variable $\beta \in \mathbb{R}$ such that $\lbrace x : \Prb{ \alpha'x = \beta } = 1 \rbrace \neq \emptyset$. Now, suppose we observe $\lbrace (\alpha_k,\beta_k) \rbrace$, which are an independent sequence with identical distribution to $(\alpha,\beta)$, from which we want to find a solution in the set $\lbrace x : \Prb{ \alpha'x = \beta} = 1 \rbrace$. This is referred to as the streaming problem. 

Let $x_0 \in \mathbb{R}^d$ and $x^*$ denote its projection onto $\lbrace x: \Prb{ \alpha'x = \beta } = 1 \rbrace$. Within our framework, we consider the trivial linear problem of solving $x = x^*$, and we let $w_k = \alpha_k$. Then, the update is given by
\begin{equation}
x_{k+1} = x_k + \frac{\alpha_k \alpha_k'(x^* - x_k)}{\norm{\alpha_k}_2^2} = x_k + \frac{\alpha_k(\beta_k - \alpha_k'x_k)}{\norm{\alpha_k}_2^2}.
\end{equation}

This procedure is $1$-Markovian. The following proposition ensures that the streaming procedure is exploratory.

\begin{proposition}
Suppose $\E{\alpha \alpha'}$ is positive definite. Then the streaming problem is exploratory for some $\pi \in (0,1]$.
\end{proposition}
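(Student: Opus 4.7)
The plan is to unpack the exploratory condition in the streaming setting. Here, the trivial system $x = x^*$ has $M = I$ and $\row(M) = \mathbb{R}^d$, and because $\alpha_k$ is drawn independently of $\mathcal{F}^k$, the procedure is $1$-Markovian. Consequently, Definition \ref{assumption-nontrivial-decay} reduces to exhibiting a $\pi \in (0,1]$ for which
\[
\Prb{\alpha \perp V} \leq 1 - \pi
\]
holds uniformly over every proper subspace $V \subsetneq \mathbb{R}^d$ containing a nonzero vector (the suprema over $y_0$ and $\zeta_{-1}$ drop out by independence of $\alpha$ from $\mathcal{F}^0$). Writing $P_V$ for the orthogonal projection onto $V$, this is equivalent to a uniform lower bound $\Prb{P_V \alpha \neq 0} \geq \pi$.

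I would establish the bound by a compactness argument on the Grassmannian. For each $k \in \{1,\ldots,d-1\}$, the set $\mathrm{Gr}(k,d)$ of $k$-dimensional subspaces of $\mathbb{R}^d$ is compact in the metric induced by operator-norm convergence of orthogonal projections. I would then show that $V \mapsto \Prb{\alpha \in V^\perp}$ is upper semicontinuous on $\mathrm{Gr}(k,d)$: if $V_n \to V^*$, then $P_{V_n} \to P_{V^*}$ in operator norm, so whenever $P_{V_n}\alpha = 0$ holds along a subsequence, passing to the limit gives $P_{V^*}\alpha = 0$. This yields the inclusion $\limsup_n \{\alpha \in V_n^\perp\} \subseteq \{\alpha \in (V^*)^\perp\}$, and the reverse Fatou lemma produces $\limsup_n \Prb{\alpha \in V_n^\perp} \leq \Prb{\alpha \in (V^*)^\perp}$. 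Since an upper semicontinuous function on a compact set attains its supremum, taking the maximum over the finitely many allowed dimensions $k$ gives a subspace $V^*$ realizing the global supremum.

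To finish, I would show that this attained maximum is strictly less than $1$. If $\Prb{\alpha \in (V^*)^\perp} = 1$, then $P_{V^*}\alpha = 0$ almost surely, whence $\E{\alpha\alpha'} P_{V^*} = 0$; positive definiteness of $\E{\alpha\alpha'}$ forces $P_{V^*} = 0$, contradicting $\dim V^* \geq 1$. Setting $\pi := 1 - \Prb{\alpha \in (V^*)^\perp} \in (0,1]$ closes the argument.

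The main obstacle is the upper semicontinuity step, since $V \mapsto \Prb{\alpha \in V^\perp}$ is not continuous in general: the event $\{v'\alpha = 0\}$ can carry positive mass at a limit $v^*$ while $\Prb{v_n'\alpha = 0} = 0$ for every approaching $v_n$. Care is therefore needed when passing from operator-norm convergence of projections to the set-theoretic inclusion that feeds reverse Fatou, and one must check that the argument is uniform across the finitely many Grassmannians $\mathrm{Gr}(k,d)$ for $k = 1, \ldots, d-1$.
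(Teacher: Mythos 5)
Your proof is correct, but it takes a more self-contained route than the paper's. The paper's own proof only verifies the pointwise non-degeneracy condition --- that $\Prb{\alpha'v = 0} < 1$ for every nonzero $v$, which follows from positive definiteness of $\E{\alpha\alpha'}$ exactly as in your final step --- and then delegates the passage from this pointwise statement to the uniform bound over all proper subspaces to Proposition 1 of the authors' earlier work \cite{PAJAMA2019}. You instead prove that uniformity directly: compactness of the Grassmannian plus upper semicontinuity of $V \mapsto \Prb{\alpha \in V^\perp}$ (via the $\limsup$ set inclusion and reverse Fatou) shows the supremum is attained, and positive definiteness rules out the value $1$ at the maximizer. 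Your semicontinuity step is sound: only \emph{upper} semicontinuity is needed to attain a maximum, so the failure of continuity you flag at the end is harmless to the argument. One simplification worth noting: since $\alpha \perp V$ implies $\alpha \perp V'$ whenever $V' \subseteq V$, the supremum over all proper nonzero subspaces is already attained on one-dimensional subspaces, so the compactness argument can be run on the unit sphere alone rather than on $\mathrm{Gr}(k,d)$ for every $k=1,\ldots,d-1$. What the paper's route buys is brevity and reuse of an existing lemma that also covers the general i.i.d.\ sketching case; what yours buys is a proof that stands on its own without the external citation and an explicit identification of $\pi$ as one minus the attained maximum of $\Prb{\alpha \in V^\perp}$.
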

\begin{proof}
Suppose
\begin{equation}
1 = \Prb{ \alpha'(x_0- x^*) = 0 } = \Prb{ [\alpha'(x_0 - x^*)]^2 = 0 }.
\end{equation}
Then, 
\begin{equation}
0 = \E{ (x_0 - x^*)'\alpha\alpha'(x_0 - x^*)} = (x_0 - x^*)'\E{ \alpha \alpha' } (x_0 - x^*) \succ 0,
\end{equation}
which is a contradiction. Hence, for every $(x_0, x^*)$ pair, $\Prb{ \alpha'(x_0 - x^*) = 0} < 1$. By \cite[Proposition 1]{PAJAMA2019}, the conclusion follows.
\end{proof}

Just as for the independent and identically distributed procedures, there exists a $g \in (0,1]$ such that the streaming problem is uniformly controlled in expectation. The conclusion follows by Corollary \ref{corollary-convergence}.

\section{Conclusion}
\label{section-conclusion}
Motivated by ease of developing highly customized (deterministic and random) row-action and column-action solvers, we developed a general theory of with probability one convergence and worst-case rate of convergence for such methods under rather reasonable properties: $N$-Markovian, Exploratory, and either finiteness or uniformly controllable in expectation. Moreover, we demonstrated how to verify these properties for a wide array of procedures in the literature, and offered many generalizations. Therefore, we have given practitioners a set of properties to guide the design of adaptive, deterministic or random, row-action or column-action solvers that are tailored to the unique problem structure and hardware context and that are guaranteed to converge. In future work, we will address block adaptive procedures.

\bibliographystyle{amsplain}
\bibliography{bibliography}

\end{document}